\newenvironment{proof}{\noindent {\bf Proof:}}{\hfill $\Box$}
\newtheorem{theorem}{Theorem}
\newtheorem{lemma}{Lemma}
\newtheorem{corollary}{Corollary}
\newtheorem{definition}{Definition}
\newtheorem{assumption}{Assumption}
\newtheorem{remark}{Remark}
\newtheorem{example}{Example}
\def\BState{\State\hskip-\ALG@thistlm}
\newcommand{\mr}[1]{\mathrm{#1}}
\newcommand{\Rb}{\mathbb{R}}
\newcommand{\Nb}{\mathbb{N}}
\newcommand{\Kf}{\mathbf{K}}
\newcommand{\bs}{\boldsymbol}
\newcommand{\new}[1]{{\color{black}#1}}
\def\be{\begin{equation}}
\def\ee{\end{equation}}
\algrenewcommand\algorithmicensure{\textbf{Output:}}
\newcommand{\ones}{\mathbbm{1}}
\title{\bf Stability and performance verification of dynamical systems controlled by neural networks: algorithms and complexity}
\author{Milan Korda$^{1,2}$}
\begin{document}

\maketitle
\thispagestyle{empty}

\footnotetext[1]{Milan Korda is with the CNRS; LAAS; 7 avenue du colonel Roche, F-31400 Toulouse; France ({\tt korda@laas.fr}) as well as with the Faculty of Electrical Engineering, Czech Technical University in Prague,
Technick\'a 2, CZ-16626 Prague, Czech Republic.}
\footnotetext[2]{This work has been supported by the Czech Science Foundation (GACR) under contract No. 20-11626Y, the European Union’s Horizon 2020 research and innovation programme under the Marie Skłodowska-Curie Actions, grant agreement 813211 (POEMA), by the AI Interdisciplinary Institute ANITI funding, through the
French “Investing for the Future PIA3” program under the Grant agreement n$^\circ$ ANR-19-PI3A-0004 as well as by the National Research Foundation, Prime Minister’s Office, Singapore under its Campus for Research Excellence and Technological Enterprise (CREATE) programme.}

\begin{abstract}
This work makes several contributions on stability and performance verification of nonlinear dynamical systems controlled by neural networks.  First, we show that the stability and performance of a polynomial dynamical system controlled by a neural network with semialgebraically representable activation functions (e.g., ReLU) can be certified by convex semidefinite programming. The result is based on the fact that the semialgebraic representation of the activation functions and polynomial dynamics allows one to search for a Lyapunov function using polynomial sum-of-squares methods.  Second,  we remark that even in the case of a linear system controlled by a neural network with ReLU activation functions, the problem of verifying asymptotic stability is undecidable.  Finally,  under additional assumptions,  we establish a converse result on the existence of a polynomial Lyapunov function for this class of systems. \new{Numerical results with code available online on examples of state-space dimension up to 50 and neural networks with several hundred neurons and up to 30 layers demonstrate the method.}
\end{abstract}

\section{Introduction}
The recent wide-spread success and adoption of neural networks in machine learning naturally lead to their applications in safety-critical domains such aerospace or automotive, thereby raising questions of safety. This work addresses this question in the setting of nonlinear dynamical systems controlled by neural network controllers (see Figure~\ref{fig:NN}). We present a method to certify stability of this closed-loop interconnection using convex semidefinite programming (SDP), under the assumption that the dynamics is polynomial and the activation functions in the neural network are semialgebraically representable (e.g., ReLU). Similarly, we derive SDPs that yield bounds on performance in terms of the nonlinear $L_2$ gain or assess robustness and input-to-state stability.

The SDPs provide sufficient conditions of the type ``If a certain SDP is feasible, then the system is stable''. The size of the SDPs can be increased in order to augment their expressive power and hence the chance of finding a stability certificate. On the other hand, if the SDP is not feasible, nothing can be concluded about the stability of the closed-loop interconnection. In fact, we prove a negative complexity result stating that the problem of deciding stability of a \emph{linear} system controlled by a ReLU neural network is \emph{undecidable} in the Turing computational model. This immediately implies the non-existence of a bound on the size of the SDPs required for stability certification computable from the input data. Therefore, there may exist bad instances of linear systems and neural networks for which the size of the SDPs required for stability certification grows to infinity or, possibly, for which all of the SDPs are  infeasible despite the closed-loop interconnection being stable.

On the other hand,  if one  assumes exponential rather than asymptotic stability (and a certain technical assumption holds),  one can prove a converse result on the existence of a polynomial Lyapunov function on compact sets for polynomial systems controlled by neural networks; this result builds on the result of~\cite{peet2009exponentially} for polynomial vector fields without a neural network in the loop.


The method presented here builds on the general framework for verifying stability of semialgebraically representable difference inclusions developed in~\cite{korda2017stability}. In relation to our method, we are aware of the works~\new{\cite{yin2020stability,fazlyab2020safety}} that tackle the problem using integral quadratic constraints, which can be seen as \new{an alternative to the approach presented here where in our case we use an exact representation of the graph of the nonlinearities appearing in the neural network whereas~\cite{yin2020stability,fazlyab2020safety} rely on typically inexact sector inclusions or integral quadratic constraints}. Our result on undecidability is to the best of our knowledge novel although it relies heavily on the work~\cite{blondel2001stability} related saturated systems. The contributions of our work can be summarized as follows:
\begin{itemize}
\item A very general framework for stability and performance analysis, encompassing all polynomial dynamical systems and semialgebraically representable neural networks,  based on convex semidefinite programming.
\item Proof of undecidability of the problem of global stability of polynomial (and even linear) systems controlled by neural networks.
\item A converse Lyapunov result for exponentially stable closed-loop interconnections of polynomial systems and neural networks on compact sets.
\end{itemize}

 

\section{Problem setting}
In this work we consider the closed-loop interconnection of a nonlinear dynamical system and a neural network as depicted in Figure~\ref{fig:NN}.

\begin{figure}[t] 
\centering
\includegraphics[scale=0.6]{./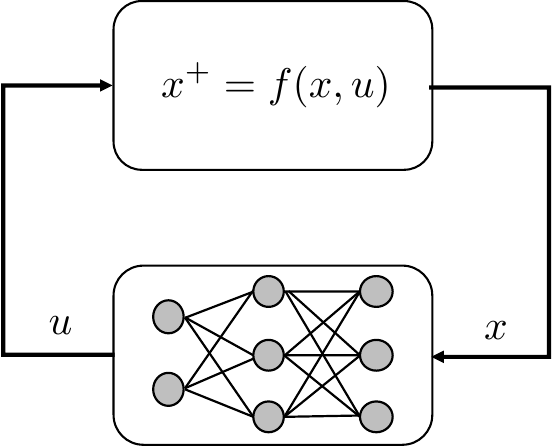} 
\caption{\small Nonlinear dynamical system controlled by a neural network.} \label{fig:NN}
\end{figure}

Specifically, we consider discrete-time dynamical systems of the form
\begin{equation}\label{eq:sys}
x^+ = f(x,u)
\end{equation}
with $x \in \Rb^n$ being the state, $x^+ \in \Rb^n$ the successor state, $u\in \Rb^m$ the control input and $f:\Rb^n\times \Rb^m\to\Rb^n$ a polynomial transition mapping. The goal is the verify the closed-loop stability and performance of system~(\ref{eq:sys}) when controlled by a neural network controller $u = \bs\psi(x)$. That is, the object of interest is the system
\begin{equation}\label{eq:cl}
x^+ = f(x,\bs\psi(x)),
\end{equation}
where $\bs\psi$ is a neural network of the form
\begin{equation}\label{eq:NN}
\bs\psi(x) = W_N(\ldots \rho_2(W_2 \rho_1(W_1x + b_1)+b_2)\ldots   )+b_N
\end{equation}
for some weight matrices $W_i$ and bias vectors $b_i$. The activation functions $\rho_i$, applied componentwise on the output of each layer, are assumed to be semialgebraic; this is satisfied, e.g., for the ReLU, leaky ReLU or the saturation function\footnote{The saturation function is typically applied at the output layer in order to enforce satisfaction of bounds on the control. \new{Other activation functions such as $\tanh$ or sigmoids are not semialgebraic and hence cannot be treated using the presented approach without a further approximation.}}. The semialgebraicity of the activation functions implies that the graph of the function $\bs\psi$ can be expressed as
\begin{align}\nonumber
\mr{graph}_{\bs\psi} = \{(x,u) \mid \exists\, \lambda\in\Rb^{n_\lambda} \;\;\mr{s.t.} \;\; & g(x,u,\lambda) \ge 0,\\
& \; h(x,u,\lambda) = 0  \}\label{eq:graphNN}
\end{align}
for some vectors of polynomials $g$ and $h$ and lifting variables $\lambda$ associated to the semialgebraic functions $\rho_i$ in $\bs\psi$.  We recall that the graph of a function $\bs\psi:\Rb^n\to \Rb^m$ is a subset of $\Rb^{n+m}$ defined as
\[
\mr{graph}_{\bs\psi}  = \big\{\big(x,\bs\psi(x)\big) \mid x\in \Rb^n \big\}.
\]
 Since for each $x$,  the control input $u$ satisfies $u = \bs\psi(x)$, it follows that $(x,u) \in \mr{graph}_{\bs\psi}$ and hence also
\begin{equation}\label{eq:contInclusion}
u \in \Kf_x,
\end{equation}
where the set $\Kf_x$ is given by
\[
\Kf_x = \big\{u \mid  \exists \lambda\in\Rb^{n_\lambda} \;\;\mr{s.t.}\;\;  g(x,u,\lambda) \ge 0,\; h(x,u,\lambda) = 0  \big\}.
\]
We note that in this case, for each $x\in\mathbb{R}^n$, the set $\mathbf{K}_x$ is a singleton although the approach of~\cite{korda2017stability} that this work is based on applies to non-singleton sets $\Kf_x$ as well.

\begin{example}\label{ex:relu}[ReLU] Consider the single-neuron network with a ReLU activation function, i.e.,
\[
\bs\psi(x) = \mr{ReLU}(w^\top x + b) = \max(w^\top x + b,0)
\]
for some vector of weights $w\in \Rb^n$ and a bias $b \in \Rb$. \new{The graph of the function $y = \mr{ReLU}(z) = \max(0,z)$ is given by
\[
\mr{graph}_{\mr{ReLU}} = \big\{(z,y)\mid y \ge z,\; y \ge 0,\; y(y-z) = 0\big\}.
\]
Substituting $w^\top x + b$ for $z$ and $u$ for $y$ },  it follows that the set $\Kf_x$ is given by
\[
\Kf_x = \{u\mid u \ge w^\top x + b,\; u\ge 0,\; u(u - w^\top x - b) = 0 \}.
\]
We note that in this case, no lifting variables $\lambda$ are needed.
\end{example}

\begin{example}[Saturation function] Consider the single-neuron network with the activation function being the saturation at $+1$ and $-1$, i.e.,
\[
\bs\psi(x) = \mr{sat}(w^\top x + b) = \min(\max(w^\top x + b,-1),1)
\]
for some vector of weights $w\in \Rb^n$ and a bias $b \in \Rb$. The graph of the saturation function is given by
\new{
\begin{align}
\mr{graph}_{\mr{sat}} = \big\{(z,y)\mid \exists\,  & \lambda\in\Rb \;\mr{s.t.}\; -1\le y \le 1,\;y \ge z-\lambda,    \nonumber \\& \hspace{-30mm} \lambda\ge 0  , \; (y-1)\lambda = 0, \; (y+1)(y-z+\lambda) =  0 \big\}. 
\end{align}
Substituting $w^\top x + b$ for $z$ and $u$ for $y$, it follows that the set $\Kf_x$ is given by}
\begin{align}
\Kf_x &= \{u \mid \exists\, \lambda\in\Rb \;\mr{s.t.}\; -1\le u \le 1,\; u \ge w^\top x + b-\lambda,   \nonumber \\ 
& \hspace{0mm} \lambda \ge 0, \; (u-1)\lambda = 0, \; (u+1)(u-w^\top x - b+\lambda) =  0 \}. 
\end{align}
In this case, one lifting variable $\lambda$ is needed.
\end{example}

To summarize this section,  the stability of (\ref{eq:cl}) is \emph{equivalent} to the stability of the difference inclusion
\[
x^+ \in f(x,\Kf_x) = \big\{f(x,u) \mid u\in \Kf_x \big\}.
\]
This observation is crucial for stability and performance analysis developed in the subsequent sections.

\section{Stability analysis}\label{sec:stability}
Stability of~(\ref{eq:cl}) can be analyzed using sum-of-squares (SOS) techniques, analogously to~\cite{korda2017stability}, where such analysis was carried out in a more general setting. Before proceeding, we recall a classical definition of stability.
\begin{definition}[Stability]\label{def:stab}
The system~(\ref{eq:cl}) is called globally asymptotically stable if the following two conditions hold:
\begin{enumerate}
\item For all initial conditions $x_0$, it holds $\lim\limits_{k\to\infty} x_k = 0$ (Global attractivity).
\item For all $\epsilon > 0$ there exists $\delta > 0$ such that if $\|x_0\|_2 \le \delta$, then $\|x_k\|_2 \le \epsilon$ for all $k$ (Lyapunov stability).
\end{enumerate}
\end{definition}

We propose to use the following set of Lyapunov conditions to assess the stability of~(\ref{eq:cl}):
\begin{align}
V(x^+,u^+,\lambda^+) - V(x,u,\lambda) &\le -\|x\|^2 \label{eq:Lyap1} \\
V(x,u,\lambda) & \ge 0 \label{eq:Lyap2}
\end{align}
for all
\[
(x,u,\lambda,x^+,u^+,\lambda^+) \in \Kf, 
\]
where
\begin{align*} \Kf =  \big\{(x,u,\lambda,x^+,u^+,\lambda^+) \mid  x^+ = f(x,u),\; & g(x,u,\lambda) \ge 0, \\ & \hspace{-60mm} h(x,u,\lambda) = 0, g(x^+,u^+,\lambda^+) \ge 0,\; h(x^+,u^+,\lambda^+) = 0 \big\}.
\end{align*}

Notice that $V$ is allowed to depend on the output of the neural network $u$ as well as the lifting variables $\lambda$, which increases the richness of the function class we search (allowing, for example, for piecewise polynomial functions of $x$ when $V$ is projected on the $x$ coordinate). Provided that~(\ref{eq:Lyap1}) and (\ref{eq:Lyap2}) hold for a continuous function $V$, the system~(\ref{eq:cl}) is globally attractive and provided that also a mild technical condition on $g$ and $h$ is satisfied, the system (\ref{eq:cl})  is globally asymptotically stable; this is formally proven in the following theorem.

\begin{theorem}\label{thm:main}
If a continuous function $V$ satisfies (\ref{eq:Lyap1}) and (\ref{eq:Lyap2}), then:
\begin{enumerate}
\item The system~(\ref{eq:cl}) is globally attractive, i.e., $x_k \to 0$ for all initial conditions.
\item \new{If in addition the neural network $\bs\psi(x) $ is continuous and there exists a continuous selection for the lifting variables, i.e., there exists a continuous function $\bar\lambda:\Rb^n\to\Rb^{n_\lambda}$ such that for all $x\in \Rb^n$ it holds $g(x,\bs\psi(x),\bar\lambda(x)) \ge 0$ and $h(x,\bs\psi(x),\bar\lambda(x)) = 0$ or $V$ does not depend on $(u,\lambda)$, then the system~(\ref{eq:cl}) is globally asymptotically stable.}

\end{enumerate}  
\end{theorem}
\begin{proof}
Let $(x_k)_{k=0}^\infty$ be a trajectory of~(\ref{eq:cl}) and let $u_k = \bs\psi(x_k)$. By construction of $\Kf_x$, there exists a sequence $(\lambda_k)_{k=0}^\infty$ such that $g(x_k,u_k,\lambda_k) \ge 0$ and $h(x_k,u_k,\lambda_k) = 0$. Since $x_{k+1} = f(x_k,\bs\psi(x_k))$, it follows that
\[
(x_k,u_k,\lambda_k,x_{k+1},u_{k+1},\lambda_{k+1}) \in \Kf
\]
for all $k$. Therefore
\[
V(x_{k+1},u_{k+1},\lambda_{k+1}) - V(x_k,u_k,\lambda_k) \le - \|x_k\|_2^2.
\]
Given $N > 0$ and summing up over $k$ leads to
\[
V(x_{N+1},u_{N+1},\lambda_{N+1}) - V(x_0,u_0,\lambda_0) \le -\sum_{k=0}^N \|x_k\|_2^2.
\]
Therefore for all $N > 0$  \begin{align} \nonumber
\sum_{k=0}^N \|x_k\|_2^2 & \le V(x_0,u_0,\lambda_0) - V(x_{N+1},u_{N+1},\lambda_{N+1}) \\ & \le V(x_0,u_0,\lambda_0) \label{eq:auxProof}
\end{align}
since $V$ is nonnegative. This implies that $x_k \to 0$, proving global attractiveness.

In order to prove Lyapunov stability (condition 2 of Definition~\ref{def:stab}), fix $\epsilon > 0$ and assume for the purpose of contradiction that there exists a sequence of initial conditions $x_0^i \to 0$ and times $k_i$ such that $\|x_{k_i}^i\|_2^2 > \epsilon$, where $x_{k_i}^i$ denotes  the solution to~(\ref{eq:cl}) starting from $x_0^i$ evaluated at time $k_i$. \new{Define $\tilde V(x)  = V(x, \bs\psi(x),\bar \lambda(x))$ with, by assumption, $\bs\psi$ and $\bar \lambda$ continuous and satisfying $g(x, \bs\psi(x),\bar\lambda(x)) \ge 0$ and $h(x,\bs\psi(x),\bar\lambda(x)) = 0$ for all $x \in \Rb^n$. It follows  that $\tilde V$ is continuous and satisfies
$\tilde{V}(x) \ge 0$ and
\[
\tilde{V}(f(x,\bs\psi(x))) - \tilde{V}(x) \le -\|x\|^2
\]
for all $x\in \Rb^n$. By the same calculation as in the first step of the proof, it follows that
\[
\|x_{k_i}\|^2_2 \le \tilde V (x_0^i) - \tilde V(x_{{N+1}}^i) 
\]
for all $i$ and all $N \ge k_i$. Since $x_0^i \to 0$ as $i\to \infty$ and $x_{N+1}^i \to 0$ as $N\to\infty$ (by the first part of the theorem) it follows that for every $\delta >0$  there exist $i_0$ and $N_0$ such that $\| x_0^{i_0} - x_{N_0+1}^{i_0} \|< \delta$. Since $\tilde V$ is continuous it is also locally uniformly continuous and hence this $\delta$ can be chosen small enough such that $\tilde V (x_0^{i_0}) - \tilde V(x_{N_0+1}^{i_0}) < \epsilon$. This implies that $\|x_{k_{i_0}}\| < \epsilon$, which is a contradiction, proving Lyapunov stability. With $V$ independent of $(u,\lambda)$, the same conclusion follows with $\tilde V$ replaced by $V$.}
\end{proof}



\begin{remark}\label{rem:bnd}
The \new{continuous selection} assumption of point~2 of the preceding theorem is satisfied by most commonly used neural networks, including the ReLU network. This is highlighted in the following corollary.
\end{remark}

\begin{corollary}\label{cor:VstabRELU}
\new{If $\bs\psi$ is a neural network with ReLU activation functions modeled as in Example~\ref{ex:relu},  then the neural network $\bs\psi$ is continuous and there exists a continuous function $\bar\lambda$ satisfying $g(x,\bs\psi(x),\bar\lambda(x))\ge 0$ and $h(x,\bs\psi(x),\bar\lambda(x))= 0$ where $g$ and $h$ describe the graph of the neural network~(\ref{eq:graphNN}).
} In addition,  if a continuous function $V$ satisfies (\ref{eq:Lyap1}) and (\ref{eq:Lyap2}) with such a neural network, then the system~(\ref{eq:cl}) is globally asymptotically stable.
\end{corollary}
\begin{proof}
\new{Observe that if modeled as in Example~\ref{ex:relu},  then for each $x$ the polynomial system $h(x,u,\lambda) = 0$ $\&$ $g(x,u,\lambda) \ge 0$ has a unique solution with $u$ being the output of the neural network $\bs\psi(x)$ and $\lambda$ being the outputs of all hidden layers. Defining $\bar\lambda(x)$ to be the outputs of all hidden layers and observing that the ReLU nonlinearity is continuous, the continuity of $\bs\psi$ and $\bar\lambda$ follows. The second claim follows in view of Theorem~\ref{thm:main}.}
\end{proof}

\subsection{Semidefinite programming verification} \label{sec:stabilitySDP}

Since $\Kf$ is basic semialgebraic, a polynomial Lyapunov function $V$ can be searched by replacing the inequality constraints~(\ref{eq:Lyap1}) and (\ref{eq:Lyap2}) by sufficient sum-of-squares conditions. Specifically, denoting $\xi = (x,u,\lambda,x^+,u^+,\lambda^+)$, the inequalities (\ref{eq:Lyap1}), (\ref{eq:Lyap2}) are replaced by

\begin{subequations}\label{eq:sos_eqs}
\begin{align}\label{eq:sos1}
\nonumber V(x,u,\lambda) - V(x^+,u^+,\lambda^+)  - \| x \| ^2_2 = \sigma_0(\xi) \\\nonumber   &\hspace{-6cm} + \sigma_1(\xi)^\top g(x,u,\lambda) + p_1(\xi)^\top h(x,u,\lambda)) \\ \nonumber & \hspace{-6cm} + \sigma_2(\xi)^\top g(x^+,u^+,\lambda^+)   + p_2(\xi)^\top  h(x^+,u^+,\lambda^+)\\ & \hspace{-6cm}  + p_3(\xi)(x^+ - f(x,u))  \\[8pt]  \label{eq:sos2}
&\hspace{-6.4cm} V(x,u,\lambda) = \bar{\sigma}_0 (x,u,\lambda) + \bar{\sigma}_1(x,u,\lambda)^\top g(x,u,\lambda) \\  &  \hspace{-4cm} + \bar p_1(x,u,\lambda)^\top h(x,u,\lambda), \nonumber   
\end{align}
\end{subequations}
where  $\sigma_0,\sigma_1,\sigma_2, \bar \sigma_0,\bar \sigma_1$ are (vectors of) polynomial sum of squares and $p_1$, $p_2$, $\bar p_1$ are (vectors of) polynomials.

From the previous discussion we conclude that stability of~(\ref{eq:cl}) can be assessed by the following SOS feasibility problem:
\begin{equation}\label{opt:sos}
\begin{array}{rclll}
& \mathrm{find} & \displaystyle V,\sigma_0,\sigma_1,\sigma_2,p_1,p_2,p_3,\bar{\sigma}_0,\bar{\sigma}_1,\bar{p}_1 \vspace{1.2mm}\\
& \mathrm{s.t.} & (\ref{eq:sos1}), (\ref{eq:sos2}) \vspace{0.4mm}\\
&& \sigma_0,\sigma_1,\sigma_2, \bar{\sigma}_0,\bar{\sigma}_1 & \hspace{-2.1cm}\text{SOS polynomials}\\
&& V,p_1, p_2,p_3, \bar{p}_1 &\hspace{-2.1cm}\text{arbitrary polynomials},
\end{array}
\end{equation}
where the decision variables are the coefficients of the polynomials \[(V,\sigma_0,\sigma_1,\sigma_2,p_1,p_2,p_3,\bar{\sigma}_0,\bar{\sigma}_1,\bar{p}_1).\] 

The two equality constraints~(\ref{eq:sos1}), (\ref{eq:sos2}) are imposed by comparing coefficients of the polynomials and hence lead to affine constraints on these coefficients. The constraint that a polynomial $\sigma$ of degree $2d$ is sum-of-squares is equivalent to the existence of a symmetric positive semidefinite matrix $W$ of size $\binom{n+d}{d}$ such that $\sigma(x) = \beta(x)^\top W \beta(x)$, where $\beta(x)$ is a basis of the space of polynomials of degree at most $d$ (e.g., the monomial basis). Therefore, when the degree of the polynomials is fixed, the optimization problem~(\ref{opt:sos}) translates to a convex semidefinite programming feasibility problem. More details on sum-of-squares programming can be found in~\cite{lasserreBook,parrilo}.

The result of this section is summarized by the following theorem:
\begin{theorem}\label{thm:mainsdp}
If the sum-of-squares optimization problem~(\ref{opt:sos}) is feasible, then:
\begin{enumerate}
\item The system~(\ref{eq:cl}) is globally attractive, i.e., $x_k \to 0$ for all initial conditions.
\item If in addition the second assumption of Theorem~\ref{thm:main} holds, then the system~(\ref{eq:cl}) is globally asymptotically stable.
\end{enumerate}
\end{theorem}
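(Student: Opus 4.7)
The plan is to show that SOS feasibility of (\ref{opt:sos}) yields a continuous function $V$ (namely, a polynomial) satisfying the Lyapunov conditions (\ref{eq:Lyap1}) and (\ref{eq:Lyap2}) pointwise on $\Kf$, and then to invoke Theorem~\ref{thm:main} to conclude both claims. Since polynomials are continuous, no additional regularity argument is required, so the bulk of the work consists of verifying the pointwise Lyapunov inequalities from the algebraic identities (\ref{eq:sos1}) and (\ref{eq:sos2}).

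First I would fix a feasible point $(x,u,\lambda,x^+,u^+,\lambda^+) \in \Kf$ and evaluate the right-hand side of (\ref{eq:sos1}) at that point. By definition of $\Kf$ we have $g(x,u,\lambda) \ge 0$ and $g(x^+,u^+,\lambda^+) \ge 0$ componentwise, $h(x,u,\lambda) = 0$ and $h(x^+,u^+,\lambda^+) = 0$ componentwise, and $x^+ - f(x,u) = 0$. Since $\sigma_0$ is SOS it is nonnegative, and since each component of the vectors $\sigma_1, \sigma_2$ is SOS (hence nonnegative) multiplied by a nonnegative component of $g$, the corresponding inner products are nonnegative. The terms involving $p_1, p_2, p_3$ vanish identically on $\Kf$ because each is multiplied by a quantity that is zero on $\Kf$; this is precisely why these multipliers are allowed to be arbitrary polynomials rather than SOS. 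Combining these observations, the right-hand side of (\ref{eq:sos1}) is nonnegative on $\Kf$, which yields $V(x,u,\lambda) - V(x^+,u^+,\lambda^+) \ge \|x\|_2^2$, i.e., exactly (\ref{eq:Lyap1}). An identical argument applied to (\ref{eq:sos2}) (using $g(x,u,\lambda) \ge 0$ and $h(x,u,\lambda) = 0$) yields $V(x,u,\lambda) \ge 0$ whenever $(x,u,\lambda)$ lies in the projection of $\Kf$, which is exactly (\ref{eq:Lyap2}).

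With the pointwise Lyapunov inequalities in hand, both parts of the conclusion follow directly from Theorem~\ref{thm:main}: part 1 (global attractivity) uses only (\ref{eq:Lyap1}) and (\ref{eq:Lyap2}) together with continuity of $V$, while part 2 (global asymptotic stability) requires additionally the local boundedness assumption on $x \mapsto \inf\{\|u\|_2 + \|\lambda\|_2 \mid g(x,u,\lambda)\ge 0,\; h(x,u,\lambda) = 0\}$, which is assumed in the hypothesis, or alternatively the independence of $V$ on $(u,\lambda)$.

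There is essentially no hard step here: the proof is a straightforward translation from the algebraic SOS certificate to the semialgebraic Lyapunov inequality on $\Kf$, followed by a citation of Theorem~\ref{thm:main}. The only place where one has to be mildly careful is keeping track of which multipliers must be SOS (those pairing with inequality constraints) versus arbitrary polynomials (those pairing with equality constraints), and verifying that the $x^+ = f(x,u)$ relation is encoded as an equality handled by the arbitrary polynomial $p_3$.
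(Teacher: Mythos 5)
Your proposal is correct and follows exactly the paper's route: the paper's own proof is the one-line observation that any $V$ feasible in~(\ref{opt:sos}) is continuous and satisfies (\ref{eq:Lyap1})--(\ref{eq:Lyap2}), so Theorem~\ref{thm:main} applies. You simply spell out the standard Positivstellensatz-style verification (SOS multipliers nonnegative against $g\ge 0$, arbitrary multipliers annihilated by $h=0$ and $x^+=f(x,u)$) that the paper leaves implicit.
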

\begin{proof}
Follows from Theorem~\ref{thm:main} since any $V$ feasible in~(\ref{opt:sos}) is continuous and satisfies (\ref{eq:Lyap1}) and (\ref{eq:Lyap2}).
\end{proof}

The followig corollary follows immediately from Corollary~\ref{cor:VstabRELU}.
\begin{corollary}
If the sum-of-squares optimization problem~(\ref{opt:sos}) is feasible with $\bs\psi$ being a neural network with ReLU activation functions modeled as in Example~\ref{ex:relu}, then the system~(\ref{eq:cl}) is globally asymptotically stable.
\end{corollary}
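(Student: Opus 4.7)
The plan is to chain together the machinery already established in the previous results, since this corollary is essentially a packaging of Theorem~\ref{thm:mainsdp} (case 2) with the boundedness verification of Corollary~\ref{cor:VstabRELU}. First, I would extract from a feasible point of the SOS program~(\ref{opt:sos}) a polynomial (hence continuous) candidate Lyapunov function $V$ together with the multiplier polynomials.

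Next, I would verify that this $V$ satisfies the pointwise Lyapunov conditions~(\ref{eq:Lyap1}) and~(\ref{eq:Lyap2}) on the set $\Kf$. For any $(x,u,\lambda,x^+,u^+,\lambda^+)\in\Kf$ we have $x^+ - f(x,u) = 0$, $g(x,u,\lambda) \ge 0$, $g(x^+,u^+,\lambda^+)\ge 0$, $h(x,u,\lambda) = 0$ and $h(x^+,u^+,\lambda^+) = 0$; plugging such a point into the SOS identity~(\ref{eq:sos1}) kills every term on the right-hand side except $\sigma_0(\xi)\ge 0$ together with the inequality multiplier terms $\sigma_1(\xi)^\top g(x,u,\lambda)\ge 0$ and $\sigma_2(\xi)^\top g(x^+,u^+,\lambda^+)\ge 0$, yielding $V(x,u,\lambda) - V(x^+,u^+,\lambda^+) - \|x\|_2^2 \ge 0$, which is exactly~(\ref{eq:Lyap1}). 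The same substitution argument applied to~(\ref{eq:sos2}) gives $V(x,u,\lambda) \ge 0$ on the feasible set, which is~(\ref{eq:Lyap2}).

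Finally, since the controller $\bs\psi$ is a ReLU network modeled as in Example~\ref{ex:relu}, Corollary~\ref{cor:VstabRELU} supplies the local boundedness of $x\mapsto \inf\{\|u\|_2+\|\lambda\|_2 \mid g(x,u,\lambda)\ge 0,\, h(x,u,\lambda)=0\}$ near the origin, so case 2 of Theorem~\ref{thm:mainsdp} applies and yields global asymptotic stability of~(\ref{eq:cl}). The only subtlety I foresee is making sure the substitution step in~(\ref{eq:sos1}) is clean about the equality constraint $x^+ = f(x,u)$ being handled by $p_3$ (whose sign is unconstrained, which is fine because the term it multiplies vanishes on $\Kf$); otherwise the proof is a direct appeal to the two earlier results.
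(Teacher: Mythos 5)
Your proposal is correct and follows essentially the same route as the paper: the paper's proof is a one-line appeal to Corollary~\ref{cor:VstabRELU} together with the observation that any feasible $V$ in~(\ref{opt:sos}) is continuous and satisfies~(\ref{eq:Lyap1}) and~(\ref{eq:Lyap2}). You merely spell out the standard substitution argument behind that observation in more detail, which is fine.
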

\begin{proof}
This follows from Corollary~\ref{cor:VstabRELU} and the fact that any $V$ feasible in (\ref{opt:sos}) is continuous and satisfies (\ref{eq:Lyap1}) and (\ref{eq:Lyap2}).
\end{proof}

\subsection{Checking stability is undecidable}
Several natural questions arise as to the possible limitations of the proposed method based on semidefinite programming:
\begin{enumerate}
\item  Does there exist a stable closed-loop interconnection of a polynomial system and a neural network controller for which the optimization problem~(\ref{opt:sos}) is infeasible no matter how high the degree of the polynomials in (\ref{opt:sos})?

 
 \item Does there exist an interesting class of systems and neural networks for which an upper bound on the degree of the polynomials in~(\ref{opt:sos}) necessary for certification of stability of a given system can be computed from the knowledge of the coefficients of the polynomial $f$ and the weights of the neural network?
\end{enumerate} 


The answer to the first question is negative, at least for continuous-time systems, since in this case there exist polynomial dynamical systems for which no polynomial Lyapunov function exists~\cite[Proposition 5.2]{bacciotti2006liapunov}. What is perhaps more surprising is that the answer to the second question is negative even for the very simple class of \emph{linear} systems controlled by ReLU neural networks. This is implied by the following result, stating that the stability verification problem in this case is \emph{undecidable}:

\begin{theorem}\label{thm:undecidable}
\new{The problem of deciding the global asymptotic stability of $x^+ = Ax + B\bs\psi(x)$ is undecidable in the Turing computation model, assuming the input of the decision algorithm is the rational matrices $A \in \mathbb{Q}^{n\times n}$, $B \in \mathbb{Q}^{n\times m}$ and the rational weights and biases of a ReLU neural network.}
\end{theorem}

\begin{proof} \looseness-1 \new{Theorem 2.1 of~\cite{blondel2001stability} proves the undecidability of the global asymptotic stability problem for the saturated systems of the form $x^+ = \mr{sat}(Dx)$, where $\mr{sat} = \min(\max(x,-1),1)$ is the saturation function applied componentwise and $D$ is an $n$-by-$n$ rational matrix}. Using the observation that
\[
\mr{sat}(x) = \mr{ReLU}(x+1) - \mr{ReLU}(x-1) - 1,
\]
one can express any saturated linear system $x^+ = \mr{sat}(Dx)$ in the form of $x^+ = Ax + B\bs\psi(x)$ by taking $A = 0$, $B = \mathbb{I}$ and
\[
W_1 = \begin{bmatrix}
D \\ D
\end{bmatrix},\quad b_1 = \begin{bmatrix} \ones\\ -\ones \end{bmatrix},\quad W_2 = [\mathbb{I}, -\mathbb{I}], \quad b_2 = -\ones,
\]
where $\ones$ is the vector of ones and $\mathbb{I}$ is the identity matrix. \new{Therefore the problem of deciding asymptotic stability of a saturated system can be reduced to the problem of deciding asymptotic stability of a neural network with ReLU activation functions. Since the former is undecidable, so is the latter.}
\end{proof}

\subsection{Converse result for exponential stability}
We finish the section on stability with a positive result.  Namely we show that under a stronger assumption related to exponential stability, a \emph{polynomial} Lyapunov function exists for~(\ref{eq:cl}) on \emph{compact sets}.  This result is analogous to the result of~\cite{peet2009exponentially} for polynomial vector fields. \new{In what follows the symbol $C^k(X)$, $k\in \mathbb{N}\cup\{\infty\}$, denotes the space of $k$-times continuously differentiable functions on a set $X\subset \Rb^n$.}  
\begin{assumption}\label{as:exp}
There exists a function $W \in C^2(\Rb^n)$
\begin{align}
W(f(x,\bs\psi(x))) - W(x) &\le -\|x\|^2_2 \label{eq:Lyap1exp_aux} \\
W(x) & \ge \|x\|^2_2 \label{eq:Lyap2exp_aux}
\end{align}
\end{assumption}
This assumption holds if the quadratic terms in (\ref{eq:Lyap1exp_aux}) and (\ref{eq:Lyap2exp_aux}) are replaced by class-$K$ functions  whenever (\ref{eq:cl}) is asymptotically stable (in this case one can find even a $W \in C^\infty(\Rb^n)$).  Alternatively,  one can construct a $W \in C^\infty(\Rb^n \setminus \{0\}) \cap C(\Rb^n)$ (i.e., smooth away from the origin) satisfying  (\ref{eq:Lyap1exp_aux}) and (\ref{eq:Lyap2exp_aux}) whenever the system~(\ref{eq:cl}) is exponentially stable according to the following definition:

\begin{definition} [Exponential stability]
The system~(\ref{eq:cl}) is called globally exponentially stable if it is Lyapunov stable per Definition~\ref{def:stab} and there exist $\alpha \in [0,1)$ and $C > 0$ such that for all initial conditions $x_0 \in \Rb^n$ and all $k\in \Nb$ we have  $\| x_k\|_2 \le C \alpha^k   \|x_0\|_2 $.
\end{definition}

However,  we did not manage to find or prove a result establishing the existence of $W \in C^2(\Rb^n)$ satisfying (\ref{eq:Lyap1exp_aux}) and (\ref{eq:Lyap2exp_aux}) under the assumption of exponential stability only.

\begin{theorem}\label{eq:thmExp}
Let Assumption~\ref{as:exp} hold,  let $\bs\psi$ be Lipschitz continuous and let $\mathbf{X}$ be a given compact set. Then there exists a polynomial $V$ such that
\begin{align}
V(x^+) - V(x) &\le -\|x\|^2_2 \label{eq:Lyap1exp} \\
V(x) & \ge \|x\|^2_2 \label{eq:Lyap2exp}
\end{align}
for all $(x,x^+) \in \mathbf{K}'$, where 
\begin{align*}
\mathbf{K}' = \{ (x,x^+) \in \Rb^{2n} \mid\; &x \in \mathbf{X},\; \exists \; (u,\lambda)\; \mr{s.t.}\; g(x,u,\lambda) \ge 0, \\ & h(x,u,\lambda) = 0,\; x^+ = f(x,u)\}
\end{align*}
\end{theorem}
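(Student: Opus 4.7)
The plan is to approximate $W$ by a polynomial $V$ on the one-step reachable image of $\mathbf{X}$, but in such a way that the approximation error is controlled by $\|x\|_2^2$ rather than by a uniform constant, so that the prescribed Lyapunov decay $-\|x\|_2^2$ survives near the origin. First I would set $F(x) := f(x,\bs\psi(x))$; since $f$ is polynomial and $\bs\psi$ is Lipschitz, $F$ is Lipschitz on $\mathbf{X}$. Summing (\ref{eq:Lyap1exp_aux}) along any trajectory yields $\sum_{k\ge 0}\|x_k\|_2^2 \le W(x_0)$, so $x_k\to 0$ and by continuity $F(0)=0$. Hence $\|F(x)\|_2\le L\|x\|_2$ on $\mathbf{X}$ for some $L>0$. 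I would then define the compact set $\mathbf{Z} := \mathbf{X}\cup F(\mathbf{X})$. Since $\Kf_x$ is a singleton, $\mathbf{K}'$ is exactly the graph of $F$ over $\mathbf{X}$, so it suffices to find a polynomial $V$ verifying (\ref{eq:Lyap1exp})--(\ref{eq:Lyap2exp}) with $x^+ = F(x)$.

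To open a quadratic margin, I would replace $W$ by $\tilde W := (1+\alpha)W$ for a small $\alpha>0$ to be chosen, so that $\tilde W(F(x))-\tilde W(x)\le -(1+\alpha)\|x\|_2^2$ and $\tilde W(x)\ge (1+\alpha)\|x\|_2^2$. The key technical step is then to approximate $\tilde W$ by a polynomial $V$ in the quadratically weighted sense
\[
|V(x)-\tilde W(x)|\le \epsilon\,\|x\|_2^2 \qquad \text{for all } x\in\mathbf{Z},
\]
for any prescribed $\epsilon>0$. I would construct $V$ as follows: let $T_2$ be the second-order Taylor polynomial of $\tilde W$ at $0$, and $R:=\tilde W-T_2$. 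Since $\tilde W\in C^2$, Taylor's integral remainder gives $R(x)=x^\top A(x)x$ with $A$ continuous and $A(0)=0$, so $h(x):=R(x)/\|x\|_2^2$, with $h(0):=0$, extends continuously to $\mathbf{Z}$. The Weierstrass approximation theorem then supplies a polynomial $q$ with $\sup_{\mathbf{Z}}|q-h|\le \epsilon$, and $V := T_2 + q\cdot \|x\|_2^2$ is polynomial and satisfies $|V-\tilde W|=\|x\|_2^2\,|q-h|\le \epsilon\|x\|_2^2$ on $\mathbf{Z}$.

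Verifying (\ref{eq:Lyap1exp}) and (\ref{eq:Lyap2exp}) is then a direct estimate: for $(x,F(x))\in\mathbf{K}'$,
\[
V(F(x))-V(x) \le -(1+\alpha)\|x\|_2^2 + \epsilon\bigl(\|F(x)\|_2^2+\|x\|_2^2\bigr) \le \bigl[-1-\alpha+\epsilon(1+L^2)\bigr]\|x\|_2^2,
\]
which is $\le -\|x\|_2^2$ as soon as $\epsilon\le \alpha/(1+L^2)$, while $V(x)\ge \tilde W(x)-\epsilon\|x\|_2^2 \ge (1+\alpha-\epsilon)\|x\|_2^2 \ge \|x\|_2^2$ once $\epsilon\le \alpha$. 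Choosing, say, $\alpha = 1$ and $\epsilon = 1/(2(1+L^2))$ closes the argument. The expected main obstacle is the weighted approximation step: a plain Weierstrass approximation of $W$ would incur a uniform $O(1)$ error that overwhelms the quadratically vanishing decay rate near the origin. It is precisely the $C^2$ regularity of $W$ that lets the Taylor remainder $R$ be factored as $\|x\|_2^2$ times a continuous function, making Weierstrass applicable to the quotient and producing the quadratic-weighted error we need.
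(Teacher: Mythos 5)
Your proof is correct and follows essentially the same route as the paper: approximate the $C^2$ function $W$ from Assumption~\ref{as:exp} by a polynomial in the $\|x\|_2^2$-weighted uniform norm on the compact set $\mathbf{X}\cup F(\mathbf{X})$, then run the three-term telescoping estimate, using $F(0)=0$ and the Lipschitz bound $\|F(x)\|_2\le L\|x\|_2$ to absorb the $\epsilon$-errors into the quadratic decay, and finally reduce $\mathbf{K}'$ to the graph of $F$ via the singleton property of $\Kf_x$. The one substantive difference is that the paper simply invokes Lemma~6 of~\cite{peet2009exponentially} for the weighted approximation step, whereas you prove it from scratch: second-order Taylor expansion with integral remainder, factoring the remainder as $\|x\|_2^2$ times a continuous function vanishing at the origin, then Weierstrass applied to the quotient. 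This makes the argument self-contained and correctly isolates the $C^2$ hypothesis as exactly what makes the quadratically weighted approximation possible. You also supply a justification of $F(0)=0$ (summability of $\|x_k\|_2^2$ along trajectories plus continuity of $F$), which the paper asserts without comment, and your pre-inflation of $W$ by the factor $(1+\alpha)$ replaces the paper's final rescaling of $V$ by $\min(\alpha,\beta)^{-1}$; these are cosmetic differences, mathematically equivalent.
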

\begin{proof}
Let $\bar f (x) = f(x,\bs\psi(x))$ and observe that $\bar f$ is Lipschitz on $\mathbf{X}$ since $f$ is polynomial and $\bs\psi$ Lipschitz. By Assumption~\ref{as:exp}, there exists a $C^2$ function $W:\Rb^n\to \Rb$ such that 
\begin{align*}
W (\bar f(x)) - W(x) \le -\|x\|^2_2 \\
W(x) \ge \|x\|^2_2.
\end{align*}
Since $\mathbf{X}$ is compact and $\bar f$ continuous, $\bar f (\mathbf{X})$ is compact.  Therefore, using Lemma~6 in \cite{peet2009exponentially}, given an $\epsilon > 0$ there exists a polynomial $V$ such that
\[
\sup_{x\in \mathbf{X} \cup \bar f(\mathbf{X})} \frac{|W(x) - V(x)| }{\|x\|_2^2} < \epsilon.
\]
Then we have
\begin{align*}
\frac{V (\bar f(x)) - V(x)}{\|x\|_2^2} & = \frac{W (\bar f(x)) - W(x)}{\|x\|_2^2}   \\ & + \frac{V (\bar f(x)) - W (\bar f(x))}{\|x\|_2^2}+ \frac{W(x) - V(x)}{\|x\|_2^2} \\
& \hspace{-2.5cm} \le \! -1 + \frac{\|\bar f(x)\|_2^2} {\|x\|_2^2}\frac{V (\bar f(x)) - W (\bar f(x))}{\|\bar f(x)\|_2^2} + \epsilon \le \!\! -1 + L^2\epsilon + \epsilon,
\end{align*}
where $L$ is the Lipschitz constant of $\bar f$ (we used the fact that $\bar f(0) = 0$ in the last step). Therefore
\[
V (\bar f(x)) - V(x) \le -[1 - \epsilon(1+L^2)] \cdot \| x\|_2^2.
\]
Similar but simpler argument shows that
\[
V \ge (1-\epsilon)\|x\|_2^2.
\]
Picking $\epsilon < (1+L^2)^{-1}$ implies $V(\bar f(x)) - V(x) \le -\alpha \|x \|_2^2$ and  $V(x) \ge \beta\|x\|_2^2$ for some $\alpha >0$ and $\beta > 0$. Dividing $V$ by $\min(\alpha,\beta)$ yields~(\ref{eq:Lyap1exp}) and (\ref{eq:Lyap2exp}) with $\bar f (x)$ in place of $x^+$.  However, by definition of the set $\mathbf{K}'$, for any $(x,x^+)\in \mathbf{K}'$ it holds $x^+ = f(x,\bs\psi(x)) = \bar f(x)$.
\end{proof}

We remark that the inequalities~(\ref{eq:Lyap1exp}) and (\ref{eq:Lyap2exp}) are a strenghtening of (\ref{eq:Lyap1}) and (\ref{eq:Lyap2}). Specfically, in (\ref{eq:Lyap1exp}) and (\ref{eq:Lyap2exp}), the function $V$ is independent of $(u,\lambda)$ and $V$ is lower bounded by $\|x\|^2_2$ rather than only nonnegative. We stress that the existence of a \emph{polynomial} $V$ satisfying (\ref{eq:Lyap1exp}) and (\ref{eq:Lyap2exp}) is guaranteed only on compact sets. We also remark that $\bs\psi$ is Lipschitz for the ReLU neural network as required by the assumptions of Theorem~\ref{eq:thmExp}.

\section{Performance and robustness analysis}\label{sec:perf_robust}
In this section we briefly outline how the proposed approach extends  to performance and robustness certification. We consider the system of the form
\begin{subequations}\label{eq:sysw}
\begin{align}
x^+ & = f(x,\bs\psi(x),w) \label{eq:sysw1} \\ 
y &= f_y(x)  \label{eq:sysw2},
\end{align}
\end{subequations}
where $y$ is the so-called performance output and $w$ is the disturbance taking values in the possibly state and control dependent set
\[
\mathbf{W}(x,u) = \{w \mid \new{r(x,u,w) \ge 0} \}
\]
\new{with $r$ being a vector of polynomials.} The following set will take place of the $\Kf$ in this setting:
\begin{align*}
 \Kf_w  =  \big\{ & \new{(x,u,\lambda,w,x^+,u^+,\lambda^+)} \mid   x^+ = f(x,u,w),\; \\ &  g(x,u,\lambda) \ge 0, \;  h(x,u,\lambda) = 0, 																										 \new{r(x,u,w)\ge 0}, \; \\ & g(x^+,u^+,\lambda^+) \ge 0,\; h(x^+,u^+,\lambda^+) = 0 \big\}.
\end{align*}

The performance metric chosen is the $\ell_2$ gain from $w$ to $y$; we also treat the closely related robust stabilization and input to state stability. Other performance metrics, both in deterministic and stochastic settings, can be considered using the same computation framework; see \cite[Section 5.3]{korda2017stability}.

\subsection{Nonlinear $\ell_2$ gain} 
We consider the nonlinear $\ell_2$ gain starting from a given initial condition (taken without loss of generality to be zero) defined as
\begin{equation}\label{eq:l2gain}
\inf \Big\{ \alpha     \mid   \sum_{k=0}^\infty \|y_k\|^2_2 \le \alpha^2 \sum_{k=0}^\infty \|w_k\|^2_2,\; x_0 = 0 \Big\},
\end{equation}
where $(y_k)_{k=0}^\infty$ is the output of system~(\ref{eq:sysw}) with zero initial condition and disturbance $(w_k)_{k=0}^\infty$.

An upper bound on the $\ell_2$ gain is provided by the following set of constraints:
 \begin{align}
V(x^+,u^+,\lambda^+,w^+) - V(x,u,\lambda,w) &\le -\|f_y(x)\|^2_2 + \gamma \|w\|_2^2  \ \label{eq:l21} \\
V(x,u,\lambda,w) & \ge 0 \label{eq:l22} \\
V(0,u,\lambda,w) & =0 \label{eq:l23}
\end{align}
for all
\[
(x,u,\lambda,w,x^+,u^+,\lambda^+,w^+) \in \Kf_w.
\]
\begin{lemma}
If $(V,\gamma)$ satisfies (\ref{eq:l21})-(\ref{eq:l22}) for all $(x,u,\lambda,w,x^+,u^+,\lambda^+,w^+) \in \Kf_w$, then the $\ell_2$ gain (\ref{eq:l2gain}) is bounded by $\sqrt{\gamma}$.
\end{lemma}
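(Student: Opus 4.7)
The plan is to imitate the classical dissipation-inequality argument for $\ell_2$ gain, adapted to the lifted formulation with $(u,\lambda)$ variables. First, fix a disturbance sequence $(w_k)_{k=0}^\infty$ and the corresponding trajectory $(x_k)_{k=0}^\infty$ of~(\ref{eq:sysw}) starting at $x_0 = 0$, with $u_k = \bs\psi(x_k)$ and $(\lambda_k)_{k=0}^\infty$ the associated lifting variables (existing by construction of $\Kf_x$). By exactly the same reasoning as in the proof of Theorem~\ref{thm:main}, the tuple $(x_k,u_k,\lambda_k,w_k,x_{k+1},u_{k+1},\lambda_{k+1},w_{k+1})$ belongs to $\Kf_w$ for every $k \ge 0$, so the hypothesis (\ref{eq:l21}) applies along the trajectory.

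Second, I would telescope (\ref{eq:l21}) from $k=0$ to $k=N$, obtaining
\[
V(x_{N+1},u_{N+1},\lambda_{N+1},w_{N+1}) - V(x_0,u_0,\lambda_0,w_0) \le -\sum_{k=0}^N \|f_y(x_k)\|_2^2 + \gamma \sum_{k=0}^N \|w_k\|_2^2.
\]
Since $x_0 = 0$, condition (\ref{eq:l23}) makes the second term on the left vanish, and condition (\ref{eq:l22}) makes the first term nonnegative, so
\[
\sum_{k=0}^N \|y_k\|_2^2 = \sum_{k=0}^N \|f_y(x_k)\|_2^2 \le \gamma \sum_{k=0}^N \|w_k\|_2^2.
\]

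Third, letting $N \to \infty$ preserves the inequality (both sides being monotone in $N$), yielding
\[
\sum_{k=0}^\infty \|y_k\|_2^2 \le \gamma \sum_{k=0}^\infty \|w_k\|_2^2 = (\sqrt{\gamma})^2 \sum_{k=0}^\infty \|w_k\|_2^2
\]
for every admissible disturbance, so $\sqrt{\gamma}$ is feasible in the infimum defining (\ref{eq:l2gain}), which proves the claim.

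I do not expect any serious obstacle here. The only subtlety worth a sentence in the write-up is the justification that $(x_k,u_k,\lambda_k,w_k,x_{k+1},u_{k+1},\lambda_{k+1},w_{k+1}) \in \Kf_w$, which requires invoking the existence of the $\lambda_k$ from the semialgebraic representation of the graph of $\bs\psi$ and the admissibility constraint $\psi(x_k,u_k,w_k) \ge 0$ on the disturbance; everything else is a direct telescoping plus the two boundary conditions (\ref{eq:l22}) and (\ref{eq:l23}).
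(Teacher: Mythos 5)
Your argument is correct and complete. Note, however, that the paper does not actually spell out a proof here: it simply defers to Lemma~5 and Corollary~1 of~\cite{korda2017stability}, so your direct telescoping/dissipation argument is a self-contained version of what those cited results establish in a more general setting. The structure you use --- verifying that the lifted tuple $(x_k,u_k,\lambda_k,w_k,x_{k+1},u_{k+1},\lambda_{k+1},w_{k+1})$ lies in $\Kf_w$, summing (\ref{eq:l21}) over $k=0,\dots,N$, killing the initial term via (\ref{eq:l23}) and $x_0=0$, dropping the terminal term via (\ref{eq:l22}), and passing to the limit by monotonicity --- is exactly the standard route and is what the general framework of~\cite{korda2017stability} specializes to here. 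One point worth making explicit in your write-up: the lemma as stated only hypothesizes (\ref{eq:l21})--(\ref{eq:l22}), yet your proof (correctly) also invokes (\ref{eq:l23}); without $V(0,u_0,\lambda_0,w_0)=0$ the telescoped bound retains an additive constant $V(x_0,u_0,\lambda_0,w_0)$ and the gain bound does not follow. Since all three conditions are listed immediately before the lemma, the statement's range is almost certainly meant to read (\ref{eq:l21})--(\ref{eq:l23}), but you should flag that your argument genuinely needs the third condition.
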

\begin{proof}
This follows from Lemma 5 and Corollary 1 in~\cite{korda2017stability}.
\end{proof}

In order to find a bound on the $\ell_2$ gain computationally, one solves the optimization problem of minimizing $\gamma$ subject to the constraints (\ref{eq:l21})-(\ref{eq:l22}) enforced via sufficient sum-of-squares constraints as in Section~\ref{sec:stabilitySDP}, leading to a convex SDP.

\subsection{Robust stabilization and Input-to-state stability}
A minor modification of inequalities (\ref{eq:l21})-(\ref{eq:l22}) allows us to verify robust stabilization and input to state stability. In this case, we enforce,
 \begin{align}
V(x^+,u^+,\lambda^+,w^+) - V(x,u,\lambda,w) &\le -\|x\|^2_2 + \gamma \|w\|_2^2  \ \label{eq:iss1} \\
V(x,u,\lambda,w) & \ge \|x\|_2^2 \label{eq:iss2} 
\end{align}
for all
\[
(x,u,\lambda,w,x^+,u^+,\lambda^+,w^+) \in \Kf_w.
\]

The following result follows by combining the arguments of Theorem~\ref{thm:main} in this work and Lemma 5 and Corollary 1 in~\cite{korda2017stability}.
\begin{lemma}
If $(V,\gamma)$ satisfies (\ref{eq:iss1})-(\ref{eq:iss2}) for all $(x,u,\lambda,w,x^+,u^+,\lambda^+,w^+) \in \Kf_w$ with $\gamma < \infty$ and if $\bs\psi$ is the ReLU neural network modeled as in Example~\ref{ex:relu}, then the the system~(\ref{eq:sysw1}) is input-to-state stable (ISS). If these inequalities are satisfied with $\gamma = 0$, then the system~(\ref{eq:sysw1}) is robustly globally asymptotically stable.
\end{lemma}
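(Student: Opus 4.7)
The plan is to mimic the proof of Theorem~\ref{thm:main} with the supply-rate inequality~(\ref{eq:iss1}) in place of the pure dissipation inequality, and then to invoke the ISS-to-$\Kc\Lc$ conversion supplied by Lemma~5 and Corollary~1 of~\cite{korda2017stability}. Given an admissible disturbance sequence $(w_k)_{k=0}^\infty$ and the resulting trajectory $(x_k)_{k=0}^\infty$ of~(\ref{eq:sysw1}), I set $u_k = \bs\psi(x_k)$ and, thanks to the ReLU modelling of Example~\ref{ex:relu}, let $\lambda_k$ be the unique vector of hidden-layer activations determined by $x_k$. The tuple $(x_k,u_k,\lambda_k,w_k,x_{k+1},u_{k+1},\lambda_{k+1},w_{k+1})$ then belongs to $\Kf_w$ for every $k$, so (\ref{eq:iss1}) applies along the whole trajectory.

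Telescoping~(\ref{eq:iss1}) from $0$ to $N$ and using the lower bound $V \ge \|x\|_2^2$ from~(\ref{eq:iss2}) yields
\begin{equation*}
\|x_{N+1}\|_2^2 + \sum_{k=0}^{N}\|x_k\|_2^2 \;\le\; V(x_0,u_0,\lambda_0,w_0) + \gamma\sum_{k=0}^{N}\|w_k\|_2^2.
\end{equation*}
This is precisely the dissipation-form ISS-Lyapunov estimate required by~\cite{korda2017stability}. Rather than reprove the conversion, I would directly invoke Lemma~5 and Corollary~1 there to extract a class-$\Kc\Lc$ function $\beta$ and a class-$\Kc$ function $\sigma$ with $\|x_k\|_2 \le \beta(\|x_0\|_2,k) + \sigma(\sup_{j<k}\|w_j\|_2)$, which is the definition of input-to-state stability.

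For the second claim, setting $\gamma = 0$ collapses~(\ref{eq:iss1}) to the pure dissipation inequality of Theorem~\ref{thm:main}, now holding uniformly over every admissible disturbance sequence. The same telescoping argument gives $\sum_{k=0}^\infty \|x_k\|_2^2 \le V(x_0,u_0,\lambda_0,w_0) < \infty$ for each disturbance realisation, which forces $x_k \to 0$ and establishes attractivity. Lyapunov stability follows from the contradiction-and-extraction argument of Theorem~\ref{thm:main}: by Corollary~\ref{cor:VstabRELU}, the ReLU modelling makes $(u_0,\lambda_0)$ a continuous deterministic function of $x_0$, so continuity of $V$ delivers $V(x_0,u_0,\lambda_0,w_0)\to 0$ as $x_0\to 0$, contradicting any putative escape from a small neighbourhood.

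The main obstacle I anticipate is the disturbance dependence of $V$ in the Lyapunov-stability portion of the $\gamma = 0$ case: one has to argue that $V(x_0,u_0,\lambda_0,w_0)$ can be made small uniformly in $w_0 \in \mathbf{W}(x_0,u_0)$ as $x_0\to 0$. This requires $\mathbf{W}(x,u)$ to be nonempty and locally bounded near the origin, a mild regularity hypothesis already implicit in the framework of~\cite{korda2017stability}; once granted, a subsequence-extraction exactly parallel to the one in the proof of Theorem~\ref{thm:main} closes the argument.
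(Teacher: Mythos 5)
Your proposal is correct and follows essentially the same route as the paper, which simply states that the result ``follows by combining the arguments of Theorem~\ref{thm:main} in this work and Lemma 5 and Corollary 1 in~\cite{korda2017stability}''; you have merely made that combination explicit (telescoping~(\ref{eq:iss1}), using~(\ref{eq:iss2}) as the lower bound, delegating the ISS conversion to the cited lemma, and reusing the subsequence-extraction argument of Theorem~\ref{thm:main} for the $\gamma=0$ case). The only small care point is that, as in the proof of Theorem~\ref{thm:main}, one should work with $V - V(0,u_0,\lambda_0,w_0)$ rather than asserting $V(x_0,u_0,\lambda_0,w_0)\to 0$ directly, but your appeal to that same extraction argument already covers this.
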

As before, replacing the inequalities by sufficient sum-of-squares constraints and minimizing $\gamma$, leads to a convex SDP.

\begin{remark}[Local results]
All results on stability, performance and robustness can be localized to a given basic semialgebraic domain of interest $\mathbf{X}$ by adding the polymomial constraints defining $\mathbf{X}$ to the descriptions of the sets $\mathbf{K}$ respectively $\mathbf{K}_w$ in Sections~\ref{sec:stability} respectively~\ref{sec:perf_robust}.
\end{remark}


\section{Numerical results}
In this section we briefly demonstrate the proposed method.  The goal is a proof-of-concept demonstration,  in terms of a computational viability of the method for systems of practically interesting dimensions,  rather than extensive evaluation on close-to-practice examples.  For this purpose,  we consider the problem of stability verification of a linear system  $x^+ = Ax + Bu$ controlled by a neural network.  We consider the neural network \new{of the form~(\ref{eq:NN})
with $\rho$ being the ReLU nonliearity applied componentwise and $b_i = 0$}.  We generate stable interconnections of this form in the following way.  First we generate a random matrix $A \in  \Rb^{n\times n} $ with its entries being independent standard gaussians.  Second,  we scale the matrix such that its spectral norm is strictly less than one.  The matrix $\new{B\in \Rb^{n\times n}}$ is a random matrix of zeros and ones scaled such that its spectral norm is one.  \new{The matrices $W_i$} are generated randomly in the same way as $A$ and scaled such that their spectral radii are equal to one.  Finally,  we make a random (non-unitary) coordinate transformation,  rendering the spectral norm of $A$ strictly greater than one while preserving the stability of the closed-loop interconnection. The stability verification was carried out by solving (\ref{opt:sos}) with a quadratic $V$ and with the degree of the polynomial multipliers $\sigma$ and $p$ chosen such that the degree of all polynomials appearing in~(\ref{eq:sos_eqs}) is at most quadratic. \new{First, we investigated the performance on a neural network with a single hidden layer (i.e., $N=2$ in~(\ref{eq:NN})) }. Table~\ref{tab:timings} reports the time\footnote{The time reported is the time spent by the interior point solver MOSEK running on Matlab and a Macbook Air with 1.2 GHz Quad-Core Intel i7 and  16GB RAM. The time does not include the parsing time of Yalmip~\cite{yalmip} that in some cases dominates the time total computation time. The parsing time could be dramatically reduced by a custom implementation (e.g. in C++ or Julia). } to solve the semidefinite program~(\ref{opt:sos}) for different dimensions of the state-space $n$ and for different numbers of neurons (= numbers of rows of $W_1$ in~(\ref{eq:NN})). Figure~\ref{fig:Lyap} shows one trajectory of the closed loop system and the computed Lyapunov function evaluated along the trajectory. \new{Second, we investigate the behavior when we increase the number of hidden layers while fixing the state-space dimension and the number of neurons per layer; the results are in Table~\ref{tab:multiLayer}.  } The scalability of the approach could be further improved by considering sparsity or symmetries,  with both topics recently developed in the context of sum-of-squares methods for dynamical systems (see~\cite{schlosser2020sparse} for sparsity and \cite {fantuzzi2020symmetries} for symmetries).  The code for the numerical examples is available from

\begin{center}
 \url{https://homepages.laas.fr/mkorda/NN.zip}
\end{center}

\new{Remarkably, for all instances of stable systems randomly generated using the procedure described above, the SDP~(\ref{opt:sos}) was feasible, thereby certifying stability (up to the floating point error of the SDP solver). This cannot be explained (but does not contradict) the theory developed in this paper and suggests that the worst-case complexity used here in Theorem~\ref{thm:undecidable} may not be best suited for this class of problems and one should perhaps try  to analyze the complexity within an alternative framework (e.g., smoothed analysis or averaged analysis). Whenever the generated system was unstable, the SDP~(\ref{opt:sos}) was infeasible, in accordance with the theory.  } 


An interesting direction of future research is to incorporate the approach within an automatic differentiation scheme~\cite{agrawal2019differentiable} in order to be able to tune the weights of the neural network while guaranteeing stability or optimizing performance.

\begin{figure*}[h]
\begin{picture}(140,175)
\put(5,0){\includegraphics[width=75mm]{./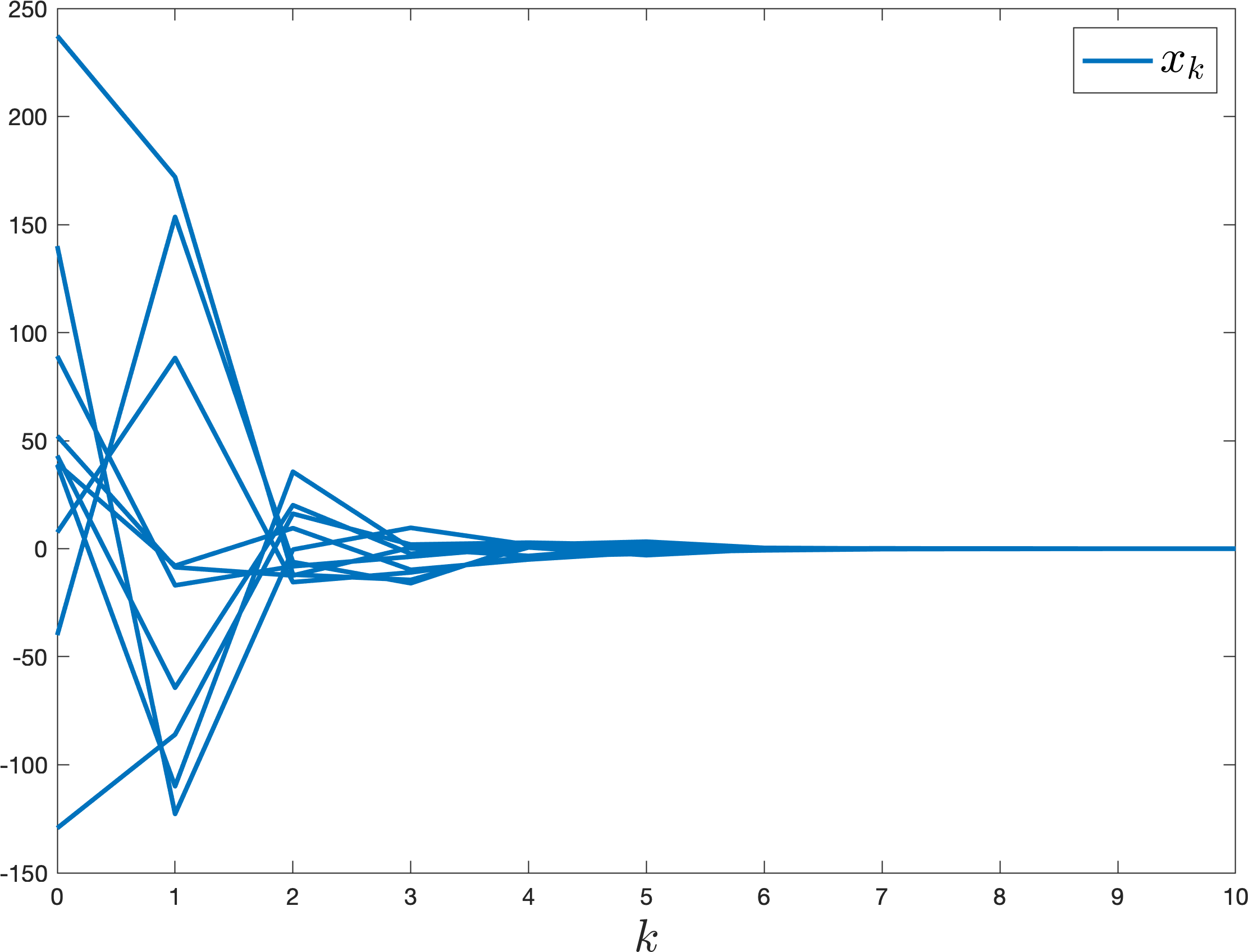}}
\put(245,0){\includegraphics[width=75mm]{./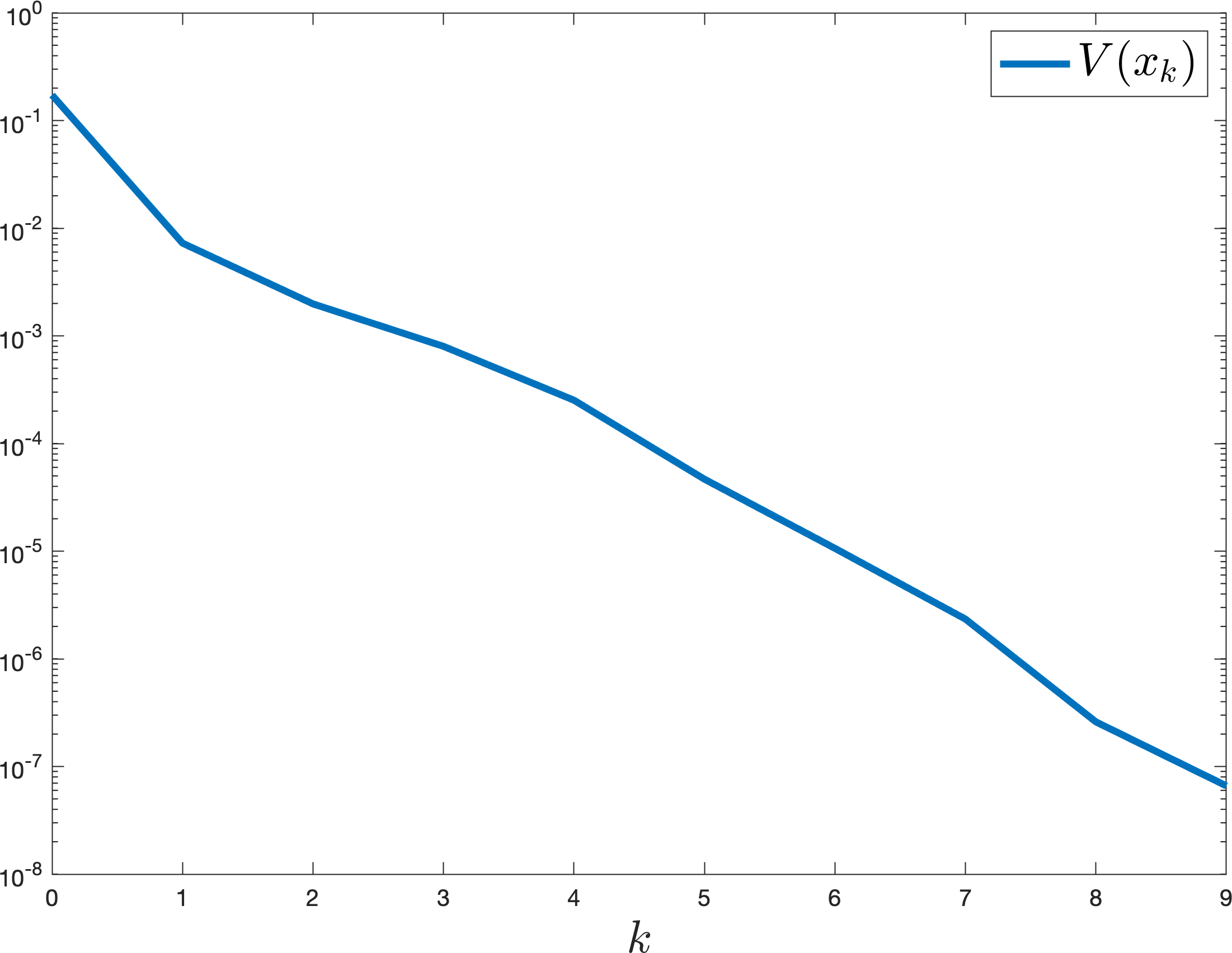}}
\end{picture}
\caption{\small Trajectory of a ten dimensional linear system controlled by a neural network with one hundred neurons (left). The associated Lyapunov function from~(\ref{opt:sos}) evaluated along the trajectory (right).}
\label{fig:Lyap}
\end{figure*}


\begin{table}[h]
\small 
\centering
\caption{\rm \scriptsize  Computation time to solve the SDP~(\ref{opt:sos}) with a single hidden layer neural network for different values of the state-space dimension $n$ and different number of neurons.  The symbol $\times$ signifies that the Yalmip parse time was taking more than two hours.  }\label{tab:timings}
\begin{tabular}{cccccc}
\toprule 
$\#$ Neurons  & 10  & 100  & 200 & 500  \\ \midrule
$n=10$ & 0.61\,s & 1.20\,s & 3.85\,s & 59.04\,s \\\midrule
$n=20$ & 0.94\,s & 10.95\,s  &31.77\,s  & $\times$\\ \midrule
$n=30$ & 1.38\,s & 15.41\,s  & 59.42\,s &$\times$ \\ \midrule
$n=50$ & 45.77\,s & $\times$ &$\times$ &$\times$ \\
\bottomrule
\end{tabular}
\end{table}

\begin{table}[t]
\small 
\centering
\caption{\rm \scriptsize  \new{Computation time to solve the SDP~(\ref{opt:sos}) for different number of Layers of the neural network with 20 neurons per layer and state-space dimension~10.}}\label{tab:multiLayer}
\begin{tabular}{cccccccc}
\toprule 
\new{$\#$ Layers}  & \new{1}  & \new{2}  & \new{5} & \new{10} & \new{20} & \new{30}  \\ \midrule
\new{Time} & \new{0.42\,s} & \new{0.51\,s} & \new{1.02\,s} & \new{1.32\,s} & \new{6.48\,s} & \new{16.2\,s} \\
\bottomrule
\end{tabular}
\end{table}


\section{Acknowledgements}

This work benefited from discussions regarding Assumption~\ref{as:exp} with Luca Zaccarian,  Aneel Tanwani and Andy Teel.


%
%

\bibliographystyle{abbrv}
\bibliography{./References}

\end{document}